\DeclareSymbolFont{rsfscript}{OMS}{rsfs}{m}{n}
\DeclareSymbolFontAlphabet{\mathrsfs}{rsfscript}
\newcommand{\Rg}{\mathrel{\mathrsfs{R}}}
\newcommand{\Lg}{\mathrel{\mathrsfs{L}}}
\newcommand{\Hg}{\mathrel{\mathrsfs{H}}}
\newtheorem{lemma}{Lemma}
\newtheorem*{theorem}{Theorem}
\begin{document}

\title[Periodic elements of the free semigroup on a biordered set]{Periodic elements of the free idempotent generated semigroup on a biordered set}

\author[Easdown, Sapir and Volkov]{D. Easdown \and M. V. Sapir \and M. V. Volkov}

\subjclass{20M05}
\renewcommand{\subjclassname}{\textup{2000} Mathematics Subject Classification}

\thanks{The second author was supported in part by the NSF grant DMS-0700811 and
by a BSF (USA-Israeli) grant. The third author acknowledges support
from the Russian Foundation for Basic Research, grant 06-01-00613.}

\address{First author's address: School of Mathematics and Statistics,
University of Sydney, NSW 2006, Australia.}
\email{\href{mailto:de@maths.usyd.edu.au}{de@maths.usyd.edu.au}}

\address{Second author's address: Department of Mathematics, 1326 Stevenson
Center, Vanderbilt University, Nashville, TN 37240, USA.}
\email{\href{mailto:m.sapir@vanderbilt.edu}{m.sapir@vanderbilt.edu}}

\address{Third author's address: Department of Mathematics and Mechanics,
Ural State University, Lenina~51, 620083 Ekaterinburg, Russia.}
\email{\href{mailto:Mikhail.Volkov@usu.ru}{Mikhail.Volkov@usu.ru}}

\begin{abstract}
We show that every periodic element of the free idempotent
generated semigroup on an arbitrary biordered set belongs
to a subgroup of the semigroup.
\end{abstract}

\maketitle

The \emph{biordered set of a semigroup} $S$ is the set of idempotents of $S$
considered as a partial groupoid with respect to the restriction of the multiplication
of $S$ to those pairs $(e,f)$ of idempotents such that $ef=e$, $ef=f$, $fe=f$ or $fe=f$.
Nambooripad~\cite{Nam79} who has initiated an axiomatic approach to biordered sets has
defined an \emph{abstract biordered set} as a partial groupoid satisfying certain second
order axioms. The first author~\cite{Eas85} has confirmed the adequacy of Nambooripad's
axiomatization by showing that each abstract biordered set is in fact the biordered set
of a suitable semigroup. Namely, if $\langle E,\circ\rangle$ is an abstract biordered set,
denote by $IG(E)$ the semigroup with presentation
$$IG(E) = \{E \mid ef = e\circ f\  \text{whenever $e\circ f$ is defined in $E$}\}.$$
The semigroup $IG(E)$ is called the \emph{free idempotent generated semigroup on $E$}.
In~\cite{Eas85} it has been shown that the biordered set of $IG(E)$ coincides with
the initial biordered set $\langle E,\circ\rangle$ (see Lemma~\ref{Easdown} below
for a precise formulation of this result).

The structure of the free idempotent generated semigroup on a biordered set is
not yet well understood. It was conjectured that subgroups of such a semigroup
should be free. Though confirmed for some partial cases (see \cite{McE02,NP80,Pas77,Pas80}),
this conjecture has been recently disproved by Brittenham, Margolis, and Meakin~\cite{BMM1}
who have found a biordered set $\langle E,\circ\rangle$ such that the semigroup $IG(E)$
has the free abelian group of rank~2 among its subgroups. Moreover, in the subsequent paper
\cite{BMM2} the same authors have proved that if $F$ is any field, and $E_3(F)$ is
the biordered set of the monoid of all $3\times 3$ matrices over $F$, then the free
idempotent generated semigroup over $E_3(F)$ has a subgroup isomorphic to the
multiplicative group of $F$. In particular, letting $F$ be the field of complex numbers,
one concludes that the free idempotent generated semigroup on a biordered set can contain
group elements of any finite order.

Recall that an element $a$ of a semigroup $S$ is said to be \emph{periodic} if $a$
generates a finite subsemigroup in $S$; in other words, if
\begin{equation}
\label{index and period}
a^h=a^{h+d}
\end{equation}
for some positive integers $h$ and $d$. Given $a$, the least $h$ and $d$ verifying
the equality~\eqref{index and period} are called respectively the \emph{index} and
the \emph{period} of $a$. The aforementioned discovery by Brittenham, Margolis, and
Meakin~\cite{BMM2} shows that there is no restriction to periods of periodic elements
in the free idempotent generated semigroup on a biordered set. The main result of the
present note demonstrates that, in contrast, indices of periodic elements in such
a semigroup are severely restricted, namely, they must be equal to 1. In other
words, we aim to show that every periodic element of $IG(E)$ must belong to
a subgroup of $IG(E)$.

We assume the reader's familiarity with Green's relations $\mathrsfs{L}$,
$\mathrsfs{R}$, $\mathrsfs{H}$ and their basic properties that can be found
in the early chapters of any general semigroup theory text. The following
property is also elementary but perhaps less known.
\begin{lemma}
Let $S$ be a semigroup, $a,e\in S$, $e^2=e$, $p,q$ positive integers where $p\le q$.
Then $a^p\Rg a^q=e$ implies $a^p\Hg e$.
\end{lemma}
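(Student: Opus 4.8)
The plan is to reduce everything to showing $a^p\Lg e$: since $\mathrsfs{H}=\mathrsfs{L}\cap\mathrsfs{R}$ and $a^p\Rg e$ is part of the hypothesis, establishing the $\mathrsfs{L}$-relation will complete the proof. To get $a^p\Lg e$ I would verify the equality $S^1a^p=S^1e$ by proving the two inclusions separately. One of them is immediate: because $p\le q$ one may write $e=a^q=a^{q-p}a^p$ (reading $a^0$ as the identity of $S^1$ in the degenerate case $p=q$), so $e\in S^1a^p$.

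For the reverse inclusion it is enough to show $a^pe=a^p$, since then $a^p=a^pe\in S^1e$. This is where the hypotheses on $e$ are used. First, from $a^p\Rg e$ we have $a^p\in a^pS^1=eS^1$, say $a^p=es$ with $s\in S^1$; then $ea^p=e^2s=es=a^p$, i.e.\ $e$ is a left identity for $a^p$ (in fact for every element of its $\mathrsfs{R}$-class). The one substantive observation is now simply that powers of a fixed element commute, so that
$$a^pe=a^pa^q=a^{p+q}=a^qa^p=ea^p=a^p,$$
as wanted. Combining the two inclusions yields $a^p\Lg e$, and hence $a^p\Hg e$.

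The argument is short and I do not expect a genuine obstacle; the only points requiring a little care are the trivial degenerate case $p=q$ (where $a^p=a^q=e$ outright) and the bookkeeping of $S^1$ versus $S$ that makes the factorization $e=a^{q-p}a^p$ legitimate. The crux, such as it is, lies in recognizing that the assumed $\mathrsfs{R}$-relation can be turned into the desired $\mathrsfs{L}$-relation via the identity $a^pa^q=a^qa^p$ together with $e$ being a left identity on its own $\mathrsfs{R}$-class.
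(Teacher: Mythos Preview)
Your proof is correct and follows essentially the same route as the paper's: both establish $a^p\Lg e$ by writing $e=a^{q-p}a^p\in S^1a^p$ and then showing $a^pe=a^{p+q}=ea^p=a^p$ via the factorization $a^p=es$ coming from $a^p\Rg e$. The only cosmetic difference is that you spell out the commutation $a^pa^q=a^qa^p$ and the left-identity property of $e$ as separate observations, whereas the paper folds them into a single chain of equalities.
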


\begin{proof}
Clearly, $e=a^{q-p}a^p\in S^1a^p$. Since $a^p=eb$ for some $b\in S^1$, we have
$$a^pe=a^{p+q}=ea^p=e(eb)=eb=a^p.$$
Thus, $a^p\in S^1e$, whence $a^p\Lg e$ and $a^p\Hg e$.
\end{proof}

We fix an arbitrary biordered set $\langle E,\circ\rangle$. Now let $E^+$ be the free
semigroup on $E$ and $\varphi:E^+\to IF(E)$ the onto morphism extending the identity
map on~$E$.

\begin{lemma}
\label{Easdown}
If $w\in E^+$ and $w\varphi$ is idempotent, then $w\varphi=e\varphi$ for some $e\in E$.
\end{lemma}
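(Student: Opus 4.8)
The plan is to follow the route of Easdown's theorem \cite{Eas85}, of which this lemma is in essence the surjectivity half. One inclusion is immediate: since $e\circ e=e$ for every $e\in E$, the relation $ee=e$ is among the defining relations of $IG(E)$, so each $e\varphi$ is idempotent and $E\varphi$ is contained in the set of idempotents of $IG(E)$. The substance of the lemma is the reverse inclusion, and for this I would argue by induction on the length $n$ of a \emph{shortest} word $w=a_1\cdots a_n$ (with each $a_i\in E$) representing a prescribed idempotent $f$ of $IG(E)$; the case $n=1$ is trivial, so suppose $n\ge 2$ and aim for a contradiction.

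Before the inductive step I would record two elementary observations about such a shortest word. First, the right-hand side $e\circ f$ of every defining relation is a single element of $E$; consequently, if some consecutive pair $(a_i,a_{i+1})$ of $w$ formed a basic pair of $\langle E,\circ\rangle$, then the relation $a_ia_{i+1}=a_i\circ a_{i+1}$ would exhibit a word of length $n-1$ representing $f$, contradicting minimality — so no two consecutive letters of $w$ form a basic pair. Second, applying $a_1\circ a_1=a_1$ and $a_n\circ a_n=a_n$ gives $a_1a_1a_2\cdots a_n\equiv w\equiv a_1\cdots a_{n-1}a_na_n$ in $IG(E)$, whence $(a_1\varphi)f=f=f(a_n\varphi)$; thus the idempotent $f$ lies below $a_1\varphi$ and below $a_n\varphi$ in the appropriate quasi-orders of the biordered set of $IG(E)$, and both $a_1\varphi$ and $a_n\varphi$ already belong to $E\varphi$.

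The crux — and what I expect to be by far the hardest part — is to show that a shortest word of length $n\ge 2$ whose image is idempotent cannot in fact be shortest: one must produce a strictly shorter word representing $f$. Since no consecutive pair of $w$ can be contracted directly, this forces one to rewrite $w$ using the defining relations in \emph{both} directions — lengthening before shortening — and to carry out and control such rewritings one has to bring in Nambooripad's second-order biordered-set axioms, in particular those governing the sandwich sets $S(e,f)$ and the singular squares they determine. This is precisely the delicate combinatorial analysis performed in \cite{Eas85} (where it is simultaneously shown that $\varphi|_E$ is injective and that the biordered set of $IG(E)$ coincides with $\langle E,\circ\rangle$), and for the purposes of the present note I would invoke that result rather than reproduce its proof.
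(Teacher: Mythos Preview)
Your proposal is correct and takes essentially the same approach as the paper: the paper's entire proof of this lemma is the single sentence ``This is the main result of \cite{Eas85},'' and you likewise conclude by invoking that result rather than reproducing it. The preliminary sketch you add (shortest-word setup, the observation that no consecutive pair can be basic, and the remark that the genuine work lies in the sandwich-set combinatorics of \cite{Eas85}) is accurate commentary on what that cited proof involves, but it does not alter the fact that both you and the paper simply cite Easdown's theorem.
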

\begin{proof}
This is the main result of \cite{Eas85}.
\end{proof}

As usual, $E^*$ stands for $E^+$ with the empty word 1 adjoined.
\begin{lemma}
If $w\in E^+$ and $w\varphi=e\varphi$ for some $e\in E$, then there exist
$u,v\in E^*$ and $f\in E$ such that $w=ufv$ and $(uf)\varphi\Lg f\varphi\Rg (fv)\varphi$.
\end{lemma}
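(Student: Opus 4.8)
The plan is to work inside $IG(E)$ with $w=b_1b_2\cdots b_n$, where each $b_i\in E$. For $0\le i\le n$ I would set $L_i=\varphi(b_1\cdots b_i)$ and $R_{i+1}=\varphi(b_{i+1}\cdots b_n)$, with the convention $L_0=R_{n+1}=1\in IG(E)^1$; then $L_n=R_1=\varphi(w)$, which is idempotent by hypothesis. For every $i$ one has $L_iR_{i+1}=\varphi(w)$, so each prefix value satisfies $L_i\ge_{\mathrsfs{R}}\varphi(w)$ and each suffix value satisfies $R_i\ge_{\mathrsfs{L}}\varphi(w)$. Since $\varphi(w)$ is idempotent, the descending chain $\varphi(w)=\varphi(w)L_0\ge_{\mathrsfs{R}}\varphi(w)L_1\ge_{\mathrsfs{R}}\cdots\ge_{\mathrsfs{R}}\varphi(w)L_n=\varphi(w)^2=\varphi(w)$ must be constant, so $\varphi(w)L_i\Rg\varphi(w)$ for all $i$, and dually $R_i\varphi(w)\Lg\varphi(w)$.

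I would then translate the goal. Writing $w=ufv$ with $f=b_k$, $u=b_1\cdots b_{k-1}$, $v=b_{k+1}\cdots b_n$ makes $(uf)\varphi\Lg f\varphi\Rg(fv)\varphi$ hold exactly when $L_k\Lg\varphi(b_k)$ and $\varphi(b_k)\Rg R_k$; since $L_k\le_{\mathrsfs{L}}\varphi(b_k)$ and $R_k\le_{\mathrsfs{R}}\varphi(b_k)$ always hold, the real content is that $\varphi(b_k)$ be recoverable from the prefix on the left and from the suffix on the right. Now $k=1$ gives the first relation for free and $k=n$ the second, so if $\varphi(b_1)\Rg\varphi(w)$ or $\varphi(b_n)\Lg\varphi(w)$ we finish at once. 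In general my candidate would be $k=$ the least $k\in\{1,\dots,n\}$ with $L_k\Rg\varphi(w)$ (which exists since $L_n=\varphi(w)$), and the goal is to show $f=b_k$ works.

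The engine of the proof should be an induction on $|w|$ that consumes applications of the defining relations. If $b_i\circ b_{i+1}$ is defined in $E$ for some $i$, then $\varphi(b_ib_{i+1})$ equals $\varphi(b_i)$ or $\varphi(b_{i+1})$, so one of these two letters may be deleted from $w$ without changing $\varphi(w)$; applying the inductive hypothesis to the shorter word and reinserting the deleted letter then gives a factorization of $w$ of the required form. Checking that the factorization transfers back is routine: the deleted letter can equally be removed inside the relevant prefix and inside the relevant suffix without altering their $\varphi$-images, so $(uf)\varphi\Lg f\varphi$ and $f\varphi\Rg(fv)\varphi$ persist.

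The hard part will be the base of this induction: a word $w$ with $\varphi(w)$ idempotent and $|w|\ge 2$ to which no defining relation of $IG(E)$ applies — such words do occur, since $\varphi$ need not be injective on reduced words. Here one must verify $L_k\Lg\varphi(b_k)$ and $\varphi(b_k)\Rg R_k$ for the candidate $k$ directly, and this is precisely the step that forces us to exploit the special shape of $IG(E)$ rather than generic semigroup theory: the analogous assertion is false in an arbitrary semigroup — in the five-element Brandt semigroup the word $e_{11}e_{22}e_{11}$, all of whose letters are idempotents, evaluates to the (idempotent) zero yet admits no factorization with the stated Green relations. What should rescue the argument is that every defining relation of $IG(E)$ merely rewrites a two-letter word as one of its two letters, together with Easdown's theorem (Lemma~\ref{Easdown}). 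The line I would pursue is that $L_{k-1}\varphi(b_k)=L_k\Rg\varphi(w)$ together with $L_{k-1}>_{\mathrsfs{R}}\varphi(w)$ (from minimality of $k$) is incompatible with $w$ being reduced unless $\varphi(b_k)\Lg L_k$, the $\mathrsfs{R}$-drop being controlled by the observation that each cyclic rotation $R_iL_{i-1}$ of $\varphi(w)$ is periodic of index at most $2$ with idempotent square, so that the first lemma applies; the $\mathrsfs{R}$-condition $\varphi(b_k)\Rg R_k$ would then follow by the mirror-image argument. Turning this last heuristic into a rigorous proof is the principal obstacle.
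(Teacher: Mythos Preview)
Your proposal contains two genuine gaps, one of which you acknowledge and one of which you do not.

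First, the inductive step is based on a false claim. You assert that if $b_i\circ b_{i+1}$ is defined in $E$ then $\varphi(b_ib_{i+1})$ equals $\varphi(b_i)$ or $\varphi(b_{i+1})$, so that applying a relation amounts to deleting a letter. This is not so: the product $e_1\circ e_2$ is defined whenever $e_1e_2\in\{e_1,e_2\}$ \emph{or} $e_2e_1\in\{e_1,e_2\}$, and in the latter two cases $e_1\circ e_2=e_1e_2$ may be a third idempotent distinct from both $e_1$ and $e_2$. Thus a defining relation replaces the two-letter block by a possibly new letter $e_3\in E$, and your ``reinsert the deleted letter'' transfer argument does not apply as stated. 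The transfer can be salvaged, but it requires the case analysis you have not done: when the distinguished letter $f'$ in the shorter word is precisely the new letter $e_3$, one must argue separately (using, e.g., $e_1e_2e_1=e_1$ in the case $e_2e_1=e_1$ to get $e_1\varphi\Rg e_3\varphi$) to locate a suitable $f\in\{e_1,e_2\}$ in the original word.

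Second, and more seriously, your induction on $|w|$ leaves the base case --- a word to which no contraction applies yet whose image is idempotent --- entirely open, and the heuristic you offer does not close it. Such words exist, since rewriting $w$ to $e$ may require expansions as well as contractions; nothing in the hypothesis lets you reduce $|w|$.

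The paper avoids both difficulties at once by inducting not on $|w|$ but on the length of a shortest sequence of elementary transitions (contractions \emph{and} expansions) from $w$ to $e$. The base case is then $w=e$, which is trivial, and the inductive step takes the first transition $w\to w'$, applies the hypothesis to $w'=u'f'v'$, and transfers the factorization back to $w$. The work lies in a case split on whether the transition is a contraction or an expansion and on where the distinguished occurrence of $f'$ sits relative to the rewritten block; the delicate sub-cases are exactly those where $f'$ coincides with $e_3$ (contraction) or with one of $e_1,e_2$ (expansion), and these are handled by short Green's-relation computations using the biorder conditions. Your prefix--suffix framework and the Brandt counterexample are both sound observations, but the route you sketch does not reach the conclusion; the change of induction variable is the missing idea.
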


\begin{proof}
Let $\sigma=\ker\varphi$. Clearly, every two $\sigma$-related words in $E^+$ can be
connected by a sequence of elementary $\sigma$-transitions of the form $xe_1e_2y\to xe_3y$
or $xe_3y\to xe_1e_2y$ where $x,y\in E^*$, $e_1,e_2,e_3\in E$ and $e_1\circ e_2=e_3$ in
the biordered set $\langle E,\circ\rangle$. We induct on the minimum length $n$ of such
a sequence from $w$ to $e$. If $n=0$, that is $w=e$, the claim is obvious since we can
set $u=v=1$ and $f=e$. Suppose $n>0$ and let $w\to w'$ be the first $\sigma$-transition
in a sequence of minimum length connecting $w$ and $e$. By the induction assumption,
$w'=u'f'v'$ for some $u',v'\in E^*$ and $f'\in E$ such that $(u'f')\varphi\Lg f'\varphi\Rg
(f'v')\varphi$. On the other hand, for some $x,y\in E^*$, $e_1,e_2,e_3\in E$, we have
the decompositions $w=xe_1e_2y$, $w'=xe_3y$ (the contraction case) or $w=xe_3y$,
$w'=xe_1e_2y$ (the expansion case).

Consider the contraction case. We have $w'=xe_3y=u'f'v'$. First suppose that the
distinguished occurrence of $f'$ happens within the word $x$, that is $x=u'f'x'$,
$v'=x'e_3y$ for some $x'\in E^*$:
\begin{center}
\unitlength=1.1mm
\begin{picture}(50,20)
\linethickness{1.2pt}
\put(0,10){\line(1,0){50}}
\linethickness{0.4pt}
\put(0,2){\line(0,1){16}}
\put(25,10){\line(0,1){8}}
\put(30,10){\line(0,1){8}}
\put(15,2){\line(0,1){8}}
\put(20,2){\line(0,1){12}}
\put(50,2){\line(0,1){16}}
\put(5.5,4){\vector(-1,0){5.4}}
\put(9.5,4){\vector(1,0){5.4}}
\put(33,4){\vector(-1,0){12.9}}
\put(37,4){\vector(1,0){12.9}}
\put(14,16){\vector(1,0){10.9}}
\put(11,16){\vector(-1,0){10.9}}
\put(42,16){\vector(1,0){7.9}}
\put(38,16){\vector(-1,0){7.9}}
\small
\put(21.8,12){$x'$}
\put(15.9,3){$f'$}
\put(6.3,3){$u'$}
\put(33.8,3){$v'$}
\put(11.8,15.5){$x$}
\put(25.9,15.5){$e_3$}
\put(38.8,15.5){$y$}
\end{picture}
\end{center}
Then the word $w=xe_1e_2y$ also decomposes as $w=ufv$ where $u=u'$, $f=f'$,
and $v=x'e_1e_2y$ so that
$$(uf)\varphi = (u'f')\varphi\Lg f'\varphi = f\varphi$$
and
$$f\varphi = f'\varphi\Rg(f'v')\varphi = (f'x'e_3y)\varphi
               = (fx'e_1e_2y)\varphi = (fv)\varphi.$$
Thus,
$$(uf)\varphi\Lg f\varphi\Rg (fv)\varphi,$$
as required.

The situation when the distinguished occurrence of $f'$ happens within the word $y$
is handled in a symmetric way.

Now suppose that $x=u'$, $y=v'$ and $e_3=f'$. Then $f'=e_1\circ e_2$ in the biordered
set $\langle E,\circ\rangle$. By the definition of a biordered set, the product
$e_1\circ e_2$ is defined if and only if either 1)~$e_1\circ e_2=e_1$, or
2)~$e_1\circ e_2=e_2$, or 3)~$e_2\circ e_1=e_1$, or 4)~$e_2\circ e_1=e_2$.
In Cases~1 and~3 set $u=u'=x$ and $v=e_2y=e_2v'$. Then $w=ue_1v$. Since
$(u'f')\varphi\Lg f'\varphi\Rg (f'v')\varphi$ and $f'\varphi=(e_1e_2)\varphi$,
we have
$$(ue_1e_2)\varphi\Lg (e_1e_2)\varphi\Rg (e_1v)\varphi.$$
Under the condition of each of the cases under consideration, $(e_1e_2e_1)\varphi=e_1\varphi$
whence $e_1\varphi\Rg (e_1e_2)\varphi$. Multiplying the relation $(ue_1e_2)\varphi\Lg (e_1e_2)\varphi$
through on the right by $e_1\varphi$, we get $(ue_1)\varphi\Lg e_1\varphi$. Thus,
$$(ue_1)\varphi\Lg e_1\varphi\Rg (e_1v)\varphi,$$
as required. Cases~2 and~4 are dual.

Now consider the expansion case. We have $w'=xe_1e_2y=u'f'v'$. The situations when the distinguished occurrence
of $f'$ happens within $x$ or $y$ are completely similar to the analogous situations
in the contraction case. Suppose that $x=u'$, $e_1=f'$ and $e_2y=v'$. Then we set $u=u'=x$
and $v=y$, whence $w=ue_3v$. Since $(u'f')\varphi\Lg f'\varphi\Rg (f'v')\varphi$, we have
$$(ue_1)\varphi\Lg e_1\varphi\Rg (e_1e_2v)\varphi=(e_3v)\varphi.$$
Multiplying the relation $(ue_1)\varphi\Lg e_1\varphi$
through on the right by $e_2\varphi$, we obtain $(ue_3)\varphi=(ue_1e_2)\varphi\Lg (e_1e_2)\varphi=e_3\varphi$.
On the other hand, from the relation
\begin{equation}
\label{2}
e_1\varphi\Rg (e_3v)\varphi
\end{equation}
we have $e_1\varphi=(e_3v)\varphi\cdot s=e_3\varphi\cdot(v\varphi\cdot s)$
for some $s\in IG(E)$, and since $e_3\varphi=(e_1e_2)\varphi=e_1\varphi\cdot e_2\varphi$,
we conclude that $e_3\varphi\Rg e_1\varphi$. From this and from~\eqref{2} we get
$e_3\varphi\Rg (e_3v)\varphi$. Thus,
$$(ue_3)\varphi\Lg e_3\varphi\Rg (e_3v)\varphi,$$
as required. The situation when $x=u'e_1$, $e_2=f'$ and $y=v'$ is handled in a symmetric way.
\end{proof}

We are ready to state and to prove our main result.
\begin{theorem}
Let $\langle E,\circ\rangle$ be a biordered set, $IG(E)$ the free idempotent generated
semigroup on $E$. Every periodic element of $IG(E)$ lies in a subgroup of $IG(E)$.
\end{theorem}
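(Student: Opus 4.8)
The plan is to prove the sharper statement that $a^{n+1}=a$ for a suitable positive integer $n$ depending on $a$; since then $a^1=a^{n+1}$ with $n+1>1$, the element $a$ has index~$1$, and as $a=a^n\cdot a\in a^nIG(E)^1$ we get $a\Rg a^n$, so by the first Lemma (with $p=1$, $q=n$) we have $a\Hg a^n$ and $a$ lies in the subgroup of $IG(E)$ with identity $a^n$. Write $a=w\varphi$ with $w\in E^+$ and choose $n$ to be a multiple of the period of $a$ not smaller than its index, so that $a^n=a^{2n}$ is idempotent. Then $w^n\varphi$ is idempotent, hence by Lemma~\ref{Easdown} $w^n\varphi=e\varphi$ for some $e\in E$, so the last Lemma applies to the word $w^n$: there are $u_0,v_0\in E^*$ and $f\in E$ with $w^n=u_0fv_0$ and $(u_0f)\varphi\Lg f\varphi\Rg (fv_0)\varphi$. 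Put $g:=f\varphi$, $\ell_0:=(u_0f)\varphi$, $r_0:=(fv_0)\varphi$; since $ff=f$ in $IG(E)$ we have $\ell_0r_0=(u_0ffv_0)\varphi=w^n\varphi=a^n$.

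The first ingredient is the identity $r_0\ell_0=g$. From $\ell_0\Lg g$ and $g^2=g$ there are $s,s'\in IG(E)^1$ with $g=s\ell_0$ and $\ell_0=s'g$, whence $\ell_0g=\ell_0$; dually there are $t,t'$ with $g=r_0t$ and $gr_0=r_0$. Then $sa^n=s\ell_0r_0=gr_0=r_0$ and $a^nt=\ell_0r_0t=\ell_0g=\ell_0$, so that $r_0\ell_0=(sa^n)(a^nt)=sa^{2n}t=sa^nt=s\ell_0=g$, using $a^{2n}=a^n$.

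The second ingredient reduces the exponent from $n$ to $1$. If $|u_0|\ge|w|$ then $w$ is a prefix of $u_0$, say $u_0=wu'_0$; cancelling the leading $w$ in $w\cdot w^{n-1}=wu'_0fv_0$ gives $w^{n-1}=u'_0fv_0$ (note that $|u_0|\ge|w|$ forces $n\ge2$). Moreover $(u_0f)\varphi=w\varphi\cdot(u'_0f)\varphi$, so $IG(E)^1(u_0f)\varphi\subseteq IG(E)^1(u'_0f)\varphi\subseteq IG(E)^1f\varphi$; since also $f\varphi\Lg(u_0f)\varphi$, all three principal left ideals coincide and $(u'_0f)\varphi\Lg f\varphi$, while $(fv_0)\varphi\Rg f\varphi$ is untouched. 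The mirror operation strips a trailing $w$ off $v_0$ and preserves the relation $(fv_0)\varphi\Rg f\varphi$. Repeating these strippings until neither of the two outer factors has length $\ge|w|$, we reach $w^m=ufv$ with $(uf)\varphi\Lg g\Rg(fv)\varphi$ and $|u|,|v|<|w|$; but then $m|w|=|u|+1+|v|\le 2|w|-1$, forcing $m=1$. So $w=ufv$, and recording the strippings, $u_0=w^{k_1}u$ and $v_0=vw^{k_2}$ with $k_1+k_2=n-1$.

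To finish, set $\ell:=(uf)\varphi$, $r:=(fv)\varphi$ and $c:=r\ell$. Then $a=w\varphi=(uffv)\varphi=\ell r$, while $\ell_0=(w^{k_1}uf)\varphi=a^{k_1}\ell$ and $r_0=(fvw^{k_2})\varphi=ra^{k_2}$, so by the first ingredient $g=r_0\ell_0=ra^{k_1+k_2}\ell=ra^{n-1}\ell$. On the other hand $c^n=(r\ell)^n=r(\ell r)^{n-1}\ell=ra^{n-1}\ell$, hence $c^n=g$. Consequently
$$a^{n+1}=(\ell r)^{n+1}=\ell(r\ell)^nr=\ell c^nr=\ell gr=\ell r=a,$$
where $\ell g=(uf)\varphi f\varphi=(uff)\varphi=(uf)\varphi=\ell$. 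This is the required equality. I expect the main obstacle to be the exponent-reduction step: it is not clear in advance that the good factorization of the high power $w^n$ can be pushed down to a good factorization of $w$, and one must be careful to keep both Green's relations alive while peeling off copies of $w$; once that is in place, the identities $r_0\ell_0=g$ and $c^n=g$ are immediate and the concluding computation is routine.
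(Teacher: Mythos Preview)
Your proof is correct and takes a genuinely different route from the paper's. Both arguments start identically: pick $n$ with $a^n$ idempotent, invoke Lemma~\ref{Easdown} and then Lemma~3 to get $w^n=u_0fv_0$ with $(u_0f)\varphi\Lg f\varphi\Rg(fv_0)\varphi$. From here the paper stays at the level of Green's relations: it left- and right-multiplies the two given relations in various ways (Green's lemma) to build a $3\times3$ egg-box diagram, reads off $w^{\ell+1}\varphi\Rg w^k\varphi$ and $w\varphi\Lg w^{\ell+1}\varphi$, and finishes with two applications of Lemma~1 and its dual. You instead prove the explicit identity $a^{n+1}=a$: your key step is the equality $r_0\ell_0=g$ (a neat consequence of $a^{2n}=a^n$ together with the Green's relations of $\ell_0,r_0$), and then the purely word-combinatorial observation that $u_0=w^{k_1}u$, $v_0=vw^{k_2}$ with $k_1+k_2=n-1$ and $w=ufv$ gives $c^n=ra^{n-1}\ell=r_0\ell_0=g$, whence $a^{n+1}=\ell g r=\ell r=a$. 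The paper's approach is more structural; yours is more computational but yields the sharper conclusion $a^{n+1}=a$ directly (so that in fact Lemma~1 is not even needed at the end: the cyclic subsemigroup $\{a,a^2,\dots,a^n\}$ is already a group). One minor remark: the effort you spend in the second ingredient preserving $(uf)\varphi\Lg g\Rg(fv)\varphi$ through the stripping is never used in your final computation---only the word identities $u_0=w^{k_1}u$, $v_0=vw^{k_2}$ and the idempotency of $f\varphi$ matter there---so the ``main obstacle'' you flagged is lighter than you thought.
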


\begin{proof}
Let $w=e_1\cdots e_n$, where $e_1,\dots,e_n\in E$, be a word in $E^+$ such that
$w\varphi\in IF(E)$ is periodic. Then $(w\varphi)^k=w^k\varphi$ is idempotent for
some $k$, whence, by Lemma~2, $w^k\varphi=e\varphi$ for some $e\in E$. If $k=1$,
there is nothing to prove, so we suppose $k>1$ and apply Lemma~3 to $w^k$.
It yields a decomposition of the form
$$w^k=(e_1\cdots e_n)^\ell e_1\cdots e_{i-1}\cdot e_i\cdot e_{i+1}\cdots e_n
(e_1\cdots e_n)^m$$
such that $0\le\ell,m<k$, $1\le i\le n$, and
\begin{equation}
\label{1}
\bigl((e_1\cdots e_n)^\ell e_1\cdots e_{i-1}e_i\bigr)\varphi\Lg e_i\varphi\Rg\bigl(e_ie_{i+1}\cdots e_n
(e_1\cdots e_n)^m\bigr)\varphi.
\end{equation}
Using Green's lemma, we deduce from~\eqref{1} the following relations:
$$\begin{array}{ccccc}
w^{\ell+1}\varphi & \stackrel{\Rg}{\rule{.8cm}{.4pt}} & (w^\ell e_1\cdots e_{i-1}e_i)\varphi
& \stackrel{\Rg}{\rule{.8cm}{.4pt}} & w^k\varphi=e\varphi\\
\rule{.4pt}{.8cm}\,\raisebox{.4cm}{\tiny$\Lg$} && \rule{1pt}{.8cm}\,\raisebox{.4cm}{\tiny$\Lg$}  &&\rule{.4pt}{.8cm}\,\raisebox{.4cm}{\tiny$\Lg$}  \\
(e_ie_{i+1}\cdots e_n)\varphi  &\stackrel{\Rg}{\rule{.8cm}{.4pt}} & e_i\varphi &\stackrel{\Rg}{\rule{.8cm}{1pt}}
& (e_ie_{i+1}\cdots e_n w^m)\varphi \\
\rule{.4pt}{.8cm}\,\raisebox{.4cm}{\tiny$\Lg$} && \rule{.4pt}{.8cm}\,\raisebox{.4cm}{\tiny$\Lg$}  &&\rule{.4pt}{.8cm}\,\raisebox{.4cm}{\tiny$\Lg$}  \\
w\varphi &\stackrel{\Rg}{\rule{.8cm}{.4pt}} & (e_1\cdots e_{i-1}e_i)\varphi &\stackrel{\Rg}{\rule{.8cm}{.4pt}} & w^{m+1}\varphi
\end{array}$$
(The ``initial'' relations in~\eqref{1} are represented by the bold lines.)
In particular, $w^{\ell+1}\varphi \Rg w^k\varphi$. Since $\ell+1\le k$,
we can apply Lemma~1 with $a=w\varphi$, $p=\ell+1$ and $q=k$, thus obtaining $w^{\ell+1}\varphi\Hg e\varphi$.
Hence $w\varphi\Lg e\varphi$ and the dual of Lemma~1 implies that $w\varphi\Hg e\varphi$,
that is, $w\varphi$ belongs to a subgroup of $IG(E)$.
\end{proof}

\textbf{Acknowledgement and a historical comment.} The authors are very much indebted to Stuart
Margolis for informing them about results of~\cite{BMM1,BMM2} and stimulating discussions. In
fact, the initial idea of this note arose in 1990 during the second and the third authors' visit
to the University of Sydney. However,  at that time it was not at all clear whether the free
idempotent generated semigroup on a biordered set may have non-idempotent periodic elements, and
publishing a result about objects that might not exist did not seem to be justified. It was not
until very recently that the examples in~\cite{BMM2} have confirmed that our theorem has indeed
a non-void applicability range.

\end{document}